\journal{Topology and its Applications}
\newtheorem{thm}{Theorem}[section]
\newtheorem{pro}[thm]{Proposition}
\newtheorem{lem}[thm]{Lemma}
\newtheorem{cor}[thm]{Corollary}
\def\leukfrac#1/#2{\leavevmode
               \kern.1em
                \raise.9ex\hbox{\the\scriptfont0 ${}_#1$}
                \hskip -1pt\kern-.1em
                /\kern-.15em\lower.10ex\hbox{\the\scriptfont0 ${}_#2$}}
\newtheorem{dfn}{Definition}[section]
\theoremstyle{definition}
\newtheorem{re}[thm]{Remark}
\newtheorem{question}[thm]{Question}
\theoremstyle{remark}
\newtheorem{claim}{Claim}
\DeclareMathOperator{\inte}{int}
\def\Int{\mathop{\operator@font Int}\nolimits}
\begin{document}
\begin{frontmatter}
\title
{Strongly locally homogeneous generalized continua of finite cohomological dimension}

\author[a]{A. Karassev\fnref{1}}
\ead{alexandk@nipissingu.ca}
\affiliation[a]{{Department of Computer Science and Mathematics, Nipissing University},
            {100 College Drive},
            {North Bay},
            {P.O. Box 5002,P1B 8L7},
           {ON},
            {Canada}}
            \fntext[l]{Partially supported by NSERC Grant 257231-15}

\author[b]{P. Krupski\corref{cor1}}
\ead{pawel.krupski@pwr.edu.pl}
\affiliation[b]{organization={Wrocław University of Science and Technology, Faculty of Pure and Applied Mathematics},
            addressline={Wybrzeże Wyspiańskiego 27, 50-370 Wrocław},
                       country={Poland}}
            \cortext[cor1]{Corresponding author}

\author[c]{V. Todorov}
\ead{tt.vladimir@gmail.com}
\affiliation[c]{organization={Department of Mathematics, UACG},
            addressline={1 H. Smirnenski blvd., 1046 Sofia},
                       country={Bulgaria}}

\author[d]{V.  Valov\fnref{2}}
\ead{veskov@nipissingu.ca}
\affiliation[d]{organization={Department of Computer Science and Mathematics, Nipissing University},
            addressline={100 College Drive},
            city={North Bay},
            postcode={P.O. Box 5002,P1B 8L7},
            state={ON},
            country={Canada}}
             \fntext[2]{Partially supported by NSERC Grant 261914-19}

\begin{abstract}
The notion of a $V^n$-continuum was introduced by Alexandroff \cite{ps} as a generalization of the concept of $n$-manifold. In this note we consider the cohomological analogue of $V^n$-continuum and prove that any strongly locally homogeneous generalized continuum $X$ with  cohomological dimension $\dim_G X=n$ is  a generalized  $V^n$-space with respect to the cohomological dimension $\dim_G$. In particular, every strongly locally homogeneous continuum of covering dimension $n$ is a $V^n$-continuum in the sense of Alexandroff. This provides a partial answer to a question raised in \cite{tv}.

 An analog of the Mazurkiewicz theorem that no subset of covering dimension $\le n-2$ cuts any region of the Euclidean $n$-space is also obtained for      strongly locally homogeneous generalized continua $X$ of cohomological dimension $\dim_G X=n$.
\end{abstract}

\begin{keyword} cohomological dimension \sep strongly locally homogeneous space \sep $V^n$-continuum
\MSC 55M10 \sep 54F45
\end{keyword}

\end{frontmatter}

\markboth{}{$V^n$-continua}



\section{Introduction}
By a \emph{space} we mean a locally compact separable metric space, unless  other\-wise stated, and  each of its connected open subsets is called a {\em region} or a  {\em generalized continuum}.     {\em Maps} are continuous mappings. The covering dimension of a space X is denoted by $\dim X$.  \v{C}ech  cohomology groups $H^n(X;G)$ with coefficients in an
Abelian group $G$ are considered everywhere below.

 The \emph{cohomological dimension} $\dim_G X$ of a space $X$ is the largest number $n$ such that there exists a closed subset $A\subset X$ with $$H^n(X,A;G)\neq 0.$$ Equivalently,  $\dim_G X\leq n$ if and only if for any closed pair $A\subset B$ in $X$ the homomorphism
$j_{X,A}^n:H^{n}(B;G)\to H^{n}(A;G)$, generated by the inclusion $A\hookrightarrow B$, is surjective, see \cite{dy}.

 If $X$ is a finite-dimensional space, then for every $G$ we have $\dim_GX\leq\dim_{\mathbb Z}X=\dim X$~\cite{ku}.

 The notion of a $V^n$-continuum was introduced by Alexandroff \cite{ps} as a generalization of the concept of $n$-manifold. A continuum $X$ is a {\em $V^n$-continuum} if for every two closed disjoint subsets $P$ and $Q$ of $X$, both having non-empty interiors $\inte P$, $\inte Q$, there exists an open cover $\omega$ of $X$ such that
 no partition $C\subset X$ between $P$ and $Q$ admits an $\omega$-map into a space $Y$ with $\dim Y\leq n-2$ (by an $\omega$-map on $C$ we mean an $\omega|C$-map, where $\omega|C=\{U\cap C: U\in\omega\}$).      We extend the notion of $V^n$-continua as follows.
\begin{dfn}
  A space $X$ is a {\em  $V^n(G)$-space} if for any two closed disjoint sets $P,Q$ in $X$ with  $\inte P\neq\varnothing\neq\inte Q$ there exists an open cover $\omega$ of $X$ such that no partition in $X$ between $P$ and $Q$ admits an $\omega$-map into a space $Y$ of cohomological dimension $\dim_G Y\le
 n-2$.
\end{dfn}
Compact $V^n(G)$-spaces were defined in \cite{kktv} under the name of  $V^n$-continua with respect to the class of all spaces with
$\dim_G\leq n-2 $  and studied in \cite{tv} as Alexandroff manifolds with respect to the class.
 One can easily observe that each  $V^n(G)$-space is connected, so it is a generalized continuum.

 Recall that a space $X$ is \emph{homogeneous} if for every $x,y\in X$ there is a homeomorphism $h:X\to X$ such that $h(x)=y$.
A space $X$ is {\em strongly locally homogeneous} if every point $x\in X$ has a basis of open neighborhoods $U$ such that for every $y,z\in U$ there is a homeomorphism $h$ on $X$ such that $h(y)=z$ and $h$ is the identity on $X\setminus U$. 

Basic examples of strongly locally homogeneous spaces are the Euclidean space $\mathbb R^n$ and the Menger universal $n$-dimensional continuum $\mathcal M_n$ \cite[Theorem 3.2.2]{be}.

 Every region of a strongly locally homogeneous space is homogeneous,  strongly locally homogeneous, and  it is also known to be locally connected~\cite{ba}.
 
 The question whether every homogeneous $ANR$-continuum $X$ of dimension $\dim X=n$ is a $V^n$-continuum was raised in \cite[Question 1.2]{tv}. In this note we prove the following theorem which provides a partial answer to that question.
\begin{thm}\label{t1}
Every strongly locally homogeneous generalized continuum $X$ with $\dim_GX=n$ is a  $V^n(G)$-space.
\end{thm}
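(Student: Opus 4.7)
The plan is a proof by contradiction. Suppose disjoint closed sets $P,Q\con X$ with $\inte P\neq\varnothing\neq\inte Q$ witness the failure of the $V^n(G)$-property: for every open cover $\omega$ of $X$ there is a partition $C_\omega\con X$ between $P$ and $Q$ and an $\omega$-map $f_\omega\from C_\omega\to Y_\omega$ with $\dim_G Y_\omega\le n-2$. The goal is to derive a contradiction by showing that $\dim_G X\le n-1$, that is, that the restriction homomorphism $j^n_{X,A}\from H^n(B;G)\to H^n(A;G)$ is surjective for every closed pair $A\con B$ in $X$.

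First I would exploit $\dim_G X=n$ to locate a point $x_0\in X$ together with arbitrarily small open neighborhoods of $x_0$ carrying a nonzero local $n$-dimensional \v{C}ech cohomology obstruction. Strong local homogeneity, combined with the homogeneity of regions \cite{ba}, lets us move such a neighborhood inside either $\inte P$ or $\inte Q$ by a self-homeomorphism of $X$ which is the identity outside a prescribed small open set. Since $X$ is connected and locally connected, finitely many such homeomorphisms can be chained along a path from $\inte P$ to $\inte Q$, producing a controlled family of cohomologically rich small neighborhoods that every partition between $P$ and $Q$ is forced to cross.

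Now I would choose the open cover $\omega$ of $X$ strictly subordinate to this chain: every $U\in\omega$ lies inside one of the small pieces. For the resulting partition $C_\omega$ and $\omega$-map $f_\omega\from C_\omega\to Y_\omega$, the refinement condition forces $f_\omega(C_\omega\cap U)$ to lie in a single element of the cover of $Y_\omega$ witnessing the $\omega$-map. Combined with the bound $\dim_G Y_\omega\le n-2$ and the local $n$-cocycle provided by $\dim_G X=n$, this should yield, via the long exact sequence of the pair $(X,X\setminus C_\omega)$, surjectivity of $j^n_{X,A}$ for every closed $A\con X$, contradicting $\dim_G X=n$. The principal obstacle is this final step: converting the purely combinatorial $\omega$-map condition into a genuine \v{C}ech-cohomological bound on $C_\omega$ that can be pushed into an exact sequence on $X$. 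Strong local homogeneity is indispensable here, making the local cohomological behavior of $X$ uniform enough to match the uniform scale of $\omega$.
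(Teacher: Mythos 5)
There is a genuine gap, and you have in fact named it yourself: the ``principal obstacle'' you defer to the end --- converting the existence of an $\omega$-map of a partition into a space of $\dim_G\le n-2$ into an actual \v{C}ech-cohomological contradiction --- is the entire mathematical content of the theorem, and your sketch offers no mechanism for it. The paper does not attempt your route of proving $\dim_G X\le n-1$ directly via a long exact sequence of $(X,X\setminus C_\omega)$; there is no known straightforward argument of that shape, because an $\omega$-map condition on a single partition does not readily yield surjectivity of $j^n_{X,A}$ for \emph{all} closed pairs. Instead, the paper encapsulates all the cohomological work in a cited result of Kuz'minov type (Proposition 2.1, refined in Lemma 2.2): every space with $\dim_G X=n$ contains a \emph{compact} pair $(\Phi,F)$ that is a strong $K^n_G$-manifold, and such a $\Phi$ is already known to be a $V^n(G)$-space. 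That reduction to a compact $V^n(G)$-subcontinuum is the missing idea in your proposal.

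A second, related problem is your ``chain of cohomologically rich small neighborhoods that every partition between $P$ and $Q$ is forced to cross.'' A partition $C$ between $P$ and $Q$ must meet any \emph{continuum} joining $P$ and $Q$, and moreover $C\cap K$ is then a partition \emph{of} $K$ between $K\cap P$ and $K\cap Q$ --- but a union of small open neighborhoods gives you no such statement, and nothing forces $C$ to meet any prescribed one of them, let alone in a cohomologically essential set. The paper's use of strong local homogeneity is exactly to repair this: by the clopen-set argument (its Claim 1), the connectedness of $X$ yields a single homeomorphism $h$ with $h(\Phi)$ meeting both $\inte P$ and $\inte Q$; then any partition $C_{\omega_0^*}$ of $X$ restricts to a partition of the compactum $h(\Phi)$, and the $V^n(G)$-property of $h(\Phi)$ gives the contradiction. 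Your homogeneity step is in the right spirit but is applied to vague ``small neighborhoods'' rather than to the one object --- a compact $V^n(G)$-space --- for which the partition-restriction argument actually closes.
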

Since $\dim_G\mathbb R^n= \dim_G\mathcal M_n = n$  for any Abelian group $G$, and both Euclidean $n$-manifolds and Menger $\mathcal M_n$-manifolds  are strongly locally homogeneous, we get the following corollary.
\begin{cor}
Connected Euclidean $n$-manifolds and Menger $\mathcal M_n$-manifolds are $V^n(G)$-spaces for any Abelian group $G$.
\end{cor}

It was established in \cite[Theorem 4]{kv} that every region $\Lambda$ of a homogeneous and locally connected space $X$ with $\dim_G\Lambda=n$ cannot be cut by an $F_\sigma$-set $M$ of dimension $\dim_G M\leq n-2$. For strongly locally homogeneous spaces we get the following theorem.

\begin{thm}\label{c1}
Let $X$ be a strongly locally homogeneous generalized continuum  with $\dim_GX=n$. Then for every pair $U,V\subset X$ of disjoint, nonempty, open sets,
 and every set $M\subset X$ with $\dim_GM\leq n-2$, there is a continuum $K\subset X\backslash M$ joining $U$ and $V$.
\end{thm}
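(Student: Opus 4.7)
The plan is to derive Theorem \ref{c1} from Theorem \ref{t1} by the classical duality between closed partitions and continua in the complement, arguing by contradiction. Suppose no continuum $K\subset X\setminus M$ meets both $U$ and $V$. Using local compactness of $X$ together with its local connectedness (inherited from strong local homogeneity), I would first pass to disjoint compact sets $P\subset U$ and $Q\subset V$ with $\inte P\neq\varnothing\neq\inte Q$; under this assumption, no continuum in $X\setminus M$ joins $P$ and $Q$. Theorem \ref{t1} asserts that $X$ is a $V^n(G)$-space, so applied to the pair $P,Q$ it supplies an open cover $\omega$ of $X$ such that no partition in $X$ between $P$ and $Q$ admits an $\omega$-map into any space of cohomological dimension at most $n-2$.

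The central step is to produce a closed partition $C$ between $P$ and $Q$ satisfying $C\subset M$. Once this is in hand, the contradiction is immediate: $C$ is closed in $X$ and contained in $M$, hence closed in $M$ with its subspace topology, and the monotonicity of $\dim_G$ on closed subspaces yields $\dim_GC\le\dim_GM\le n-2$; the identity map $C\to C$ is trivially an $\omega|C$-map (its fibres are singletons, so they refine $\omega|C$) onto a space of cohomological dimension at most $n-2$, in direct contradiction with the $V^n(G)$-property furnished by Theorem \ref{t1}.

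The construction of the partition $C\subset M$ is therefore the main obstacle. It amounts to a standard separation-versus-continuum lemma in locally compact, locally connected metric spaces: if two disjoint closed sets cannot be joined by a continuum in the complement of a subset, then that subset must contain a closed partition between them. For $M$ closed this is a classical consequence of the boundary bumping lemma combined with the local connectedness of $X$; for an arbitrary $M$ with $\dim_GM\le n-2$ the argument proceeds along the same lines after using the standard extension properties of cohomological dimension in Polish spaces to replace $M$ by a $G_\delta$-envelope of the same cohomological dimension, which absorbs the non-closedness of $M$ and allows the classical partition construction to be carried out.
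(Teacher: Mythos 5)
Your reduction of Theorem~\ref{c1} to Theorem~\ref{t1} hinges entirely on the claim that if no continuum in $X\setminus M$ joins $P$ and $Q$, then $M$ contains a partition (a closed subset of $X$) between $P$ and $Q$. That claim is only a ``standard separation-versus-continuum lemma'' when $M$ is closed: there $X\setminus M$ is open, hence locally compact, locally connected and Polish, its components are open and arcwise connected, and the absence of a joining continuum does yield a clopen splitting of $X\setminus M$ whose complement is a partition inside $M$. For a non-closed $M$ the argument collapses, and passing to a $G_\delta$-envelope of the same cohomological dimension does not repair it: a $G_\delta$ set is still not closed, its complement is merely $\sigma$-compact, components of that complement need not be open in it nor arcwise connected, and the classical distinction between a set that \emph{cuts} between $P$ and $Q$ (every joining continuum meets it) and a set that \emph{separates} (contains a closed partition) is genuine --- cutting does not imply containing a closed partition. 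So the central step of your proof is unproved and, as stated for arbitrary $M$, false. (A secondary issue: the enlargement of an arbitrary $M$ with $\dim_G M\le n-2$ to a $G_\delta$-set of the same cohomological dimension is itself a nontrivial completion theorem, not an automatic ``standard extension property,'' but this is moot given the main gap.)

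This is precisely the point where the paper takes a different, and necessary, route. It does not derive Theorem~\ref{c1} from the $V^n(G)$ property of Theorem~\ref{t1} at all. Instead it uses the full strength of Lemma~\ref{le1}: $X$ contains a compact pair $(\Phi,F)$ that is a \emph{strong} $K^n_G$-manifold with $\dim_G\Phi=n$; the same clopenness-of-$\Gamma$ argument as in the proof of Theorem~\ref{t1} produces a homeomorphism $h$ with $h(\Phi)$ meeting both $U$ and $V$; and then Theorem~\ref{t3} (Theorem 3.1 of \cite{tv}), applied to the strong $K^n_G$-manifold $(h(\Phi),h(F))$ and the normally placed set $M\cap h(\Phi)$, supplies the continuum $K\subset h(\Phi)\setminus M$. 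All of the difficulty caused by $M$ being non-closed is absorbed by that cited theorem, whose proof is cohomological (it exploits the distinguished class $\mathrm{e}$ in the definition of strong $K^n_G$-connectedness) rather than a reduction to closed partitions. The paper's Remark after Lemma~\ref{le1} explicitly flags that the strong $K^n_G$-manifold conclusion --- not merely the $V^n(G)$ property --- is what Theorem~\ref{c1} requires; your proposal in effect assumes the opposite. To fix your argument you would either have to restrict $M$ to be closed (a strictly weaker theorem) or import something equivalent to Theorem~\ref{t3}.
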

It is well known that finite-dimensional homogeneous $ANR$-continua share many properties with Euclidean manifolds, see \cite{vv}. This fact and
Theorem~\ref{t1} suggest the following question, the positive solution of which would answer \cite[Question 1.2]{tv}.

\begin{question}
Let $X$ be a homogeneous $ANR$-continuum with $\dim X=n$. Is it true that $X$ is strongly locally homogeneous? 
\end{question}

\section{Proofs of Theorems~\ref{t1}  and~\ref{c1}}
If $\omega$ is an open cover of $X$ and $F\subset X$ is closed, for every closed $C\subset X$ we consider a canonical map $\kappa_{\omega_{C}}:(C,C\cap F)\to (N_{\omega|C},N_{\omega|(C\cap F)})$, where $N_{\omega|C}$ and
$N_{\omega|(C\cap F)}$ are the nerves of the covers $\omega|C$ and $\omega|(C\cap F)$, respectively (see, e.g.,~\cite[\S 11]{es}). The following notion was introduced in \cite{tv}.

Let $P$ and $Q$ be disjoint, nonempty, open subsets of  a continuum $X$, and $F\subset X$ be closed. We say that the pair $(X,F)$ is {\em $K_G^n$-connected between $P$ and $Q$} if there exists an open cover $\omega$ of $X$ such that the following condition holds for every partition $C$ in $X$ between $P$ and $Q$:

 the map $\kappa_{\omega_{C}}\colon (C,C\cap F)\to (N_{\omega|C},N_{\omega|(C\cap F)})$ generates a non-trivial homomorphism $$\kappa_{\omega_{C}}^*:H^{n-1}(N_{\omega|C},N_{\omega|(C\cap F)};G)\to H^{n-1}(C,C\cap F;G).$$ If, in addition, there is  $\mathrm{e}\in H^{n-1}(N_\omega,N_{\omega|(Y\cap F)})$ such that $$\kappa_{\omega_{C}}^*(i^{\ast}_{\omega_{C}}(\mathrm{e}))\neq 0$$  for every partition $C$ in $X$ between $P$ and $Q$, the pair $(X,F)$ is called {\em strongly $K^n_G$-connected between $P$ and $Q$}.
  Here, $Y=X\backslash(P\cup Q)$ and $$i_{\omega_{C}}:(N_{\omega|C},N_{\omega|(C\cap F)})\to (N_\omega,N_{\omega|(Y\cap F)})$$ is the inclusion.

Further, $(X,F)$ is said to be a {\em $K^n_G$-manifold} ({\em strong $K^n_G$-manifold}) if it is $K_G^n$-connected (resp., strongly $K^n_G$-connected) between any two nonempty open disjoint sets $P,Q\subset X$.

The notion of a (strong) $K^n_G$-manifold is justified by the following result which was actually established in \cite[Theorem 1]{ku1}.
\begin{pro}
Every compactum $X$ (not necessarily metrizable) with $\dim_GX=n$ contains a pair $(Y,F)$ of closed sets with $\dim_GY=n$ such that
$F$ is nowhere dense in $Y$ and $(Y,F)$ is a strong $K^n_G$-manifold.
\end{pro}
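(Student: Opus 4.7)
The plan is to obtain $(Y,F)$ as a \emph{minimal} closed pair inside $X$ on which an essential $n$-dimensional Čech cohomology class survives, and then to read the strong $K^n_G$-manifold property off that minimality. This is essentially the argument in \cite[Theorem~1]{ku1}; I outline how I would organize it.

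Since $\dim_G X=n$, I would fix a closed $A\subset X$ and a nonzero class $\alpha\in H^n(X,A;G)$. Consider the family
$$\mathcal F=\{(Y,F):A\cap Y\subset F\subset Y\subset X,\ Y\text{ and }F\text{ closed},\ \alpha|_{(Y,F)}\neq 0\},$$
ordered by reverse inclusion of pairs. Continuity of Čech cohomology under decreasing intersections of compact pairs provides a lower bound for every chain: if $(Y_\beta,F_\beta)$ decreases with $\alpha|_{(Y_\beta,F_\beta)}\neq 0$ at every stage, the image of $\alpha$ in $H^n\bigl(\bigcap Y_\beta,\bigcap F_\beta;G\bigr)=\varinjlim_\beta H^n(Y_\beta,F_\beta;G)$ cannot vanish, because doing so would force vanishing at a finite stage. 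Zorn's lemma then supplies a minimal $(Y,F)\in\mathcal F$. Excision forces $F$ to be nowhere dense in $Y$ (otherwise $(\CL{Y\setminus F},F\cap\CL{Y\setminus F})$ would be a strictly smaller member of $\mathcal F$), and $H^n(Y,F;G)\neq 0$ gives $\dim_G Y=n$.

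To verify the strong $K^n_G$-manifold property, I would take nonempty disjoint open $P,Q\subset Y$ and any partition $C$ between them, so $Y\setminus C=P_1\sqcup Q_1$ with $P\subset P_1$ and $Q\subset Q_1$. The Mayer--Vietoris sequence for the closed cover $\{\CL{P_1}\cup F,\CL{Q_1}\cup F\}$ of $Y$, whose pairwise intersection is $C\cup F$, contains
$$H^{n-1}(C,C\cap F;G)\xrightarrow{\ \delta\ }H^n(Y,F;G)\longrightarrow H^n(\CL{P_1}\cup F,F;G)\oplus H^n(\CL{Q_1}\cup F,F;G).$$
Minimality of $(Y,F)$ kills $\alpha$ on each summand of the right-hand term, so $\delta$ hits $\alpha$ and some $e_C\in H^{n-1}(C,C\cap F;G)$ maps to it; this already yields $K^n_G$-connectedness between $P$ and $Q$. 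For the strong version, I would represent $\alpha$ at the level of a sufficiently fine cover $\omega$ of $Y$ using $H^n(Y,F;G)=\varinjlim_\omega H^n(N_\omega,N_{\omega|F};G)$. Naturality of Mayer--Vietoris under refinement of covers yields a single nerve-level class $\mathrm{e}\in H^{n-1}(N_\omega,N_{\omega|(Y\cap F)};G)$ such that $\kappa_{\omega_C}^*(i_{\omega_C}^*(\mathrm{e}))=e_C\neq 0$ for every partition $C$ simultaneously.

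The step I expect to be the main obstacle is coordinating the two limit procedures---Zorn's lemma on nested compact pairs (relying on Čech continuity in the space variable) and the nerve-level realization of the Mayer--Vietoris class (relying on Čech continuity under cover refinement)---so that \emph{one} class $\mathrm{e}$ serves as witness across \emph{all} partitions $C$ at once. Working in non-metrizable compacta, as in \cite{ku1}, is also where a little extra bookkeeping is required.
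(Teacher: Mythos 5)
The paper offers no proof of this proposition: it is imported wholesale from Kuz'minov \cite{ku1} and used as a black box in Lemma~2.2, so your sketch can only be measured against whether it would stand on its own. Its overall strategy---a minimal closed pair obtained from Zorn's lemma and the continuity of \v{C}ech cohomology, a Mayer--Vietoris argument across a partition, and a nerve-level realization of the resulting class---is indeed the expected route through \cite{ku1}. But the decisive step is exactly the one you flag at the end and do not carry out. The proposition asserts the \emph{strong} $K^n_G$-manifold property: for each pair $P,Q$ there must be one cover $\omega$ and one class $\mathrm{e}\in H^{n-1}(N_\omega,N_{\omega|(Y\cap F)};G)$ whose pullback is nonzero on \emph{every} partition $C$ simultaneously. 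Your Mayer--Vietoris step only produces, for each individual $C$, some $e_C\in H^{n-1}(C,C\cap F;G)$ with $\delta(e_C)=\alpha$; such an $e_C$ is determined only up to the image of $H^{n-1}(\CL{P_1}\cup F,F;G)\oplus H^{n-1}(\CL{Q_1}\cup F,F;G)$, and nothing forces the classes chosen for different partitions to be restrictions of a single nerve-level class. The sentence ``naturality of Mayer--Vietoris under refinement of covers yields a single nerve-level class'' asserts precisely the nontrivial content of Kuz'minov's theorem---the passage from $K^n_G$-connected to \emph{strongly} $K^n_G$-connected is what Theorem~3.1 of \cite{tv}, and hence Theorem~1.2 of this paper, actually consumes. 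To close it one must represent $\alpha$ on the nerve of a fixed $\omega$ and run the coboundary argument at the nerve level, verifying that minimality kills the \emph{nerve-level} class on the two sides of the partition and not merely its image in the cohomology of the subspaces; none of that is in the sketch.

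There is also a foundational defect in the setup of $\mathcal F$: with $A\cap Y\subset F$ the inclusion $(Y,F)\hookrightarrow(X,A)$ is not a map of pairs (that would require $F\subset A$), so there is no homomorphism $H^n(X,A;G)\to H^n(Y,F;G)$ and ``$\alpha|_{(Y,F)}\neq 0$'' is undefined as written. The standard repair is to start from a class $\gamma\in H^{n-1}(A;G)$ that does not extend over $X$ (such $\gamma$ exists by the characterization of $\dim_G$ quoted in the introduction) and to order triples $(Y,F,e)$ with $e\in H^{n-1}(F;G)$ non-extendable over $Y$; the Zorn step then requires choosing coherent witnesses along a chain, an inverse-limit-of-nonempty-sets argument that is genuine extra work in the non-metrizable setting. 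The remaining steps (nowhere density of $F$ via strong excision, $\dim_G Y=n$ from $H^n(Y,F;G)\neq 0$) are fine once the family is set up correctly. As it stands, the proposal is a reasonable outline of Kuz'minov's argument, not a proof of the statement.
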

  We need  an analogous statement for locally compact spaces.

\begin{lem}\label{le1}
Every space $X$ with $\dim_G X=n$ contains  a strong $K^n_G$-manifold $(\Phi,F)$, where $\Phi$ is a compactum with $\dim_G\Phi=n$ and $F$ is a closed nowhere dense subset of  $\Phi$. In particular,   $\Phi$ is
a $V^n(G)$-space.
\end{lem}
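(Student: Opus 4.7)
The plan is to reduce the locally compact case to the compact case treated in the cited proposition. Since $X$ is locally compact, separable, and metrizable, it is $\sigma$-compact, so I write $X=\bigcup_{i=1}^{\infty}K_i$ as a countable union of compacta. By monotonicity of $\dim_G$ on closed subsets, $\dim_G K_i\le n$ for every $i$, while the countable sum theorem for cohomological dimension, applied to this closed cover of $X$, gives $\dim_G X=\sup_i\dim_G K_i$. Therefore $\dim_G K_{i_0}=n$ for some $i_0$. Applying the quoted proposition to $K_{i_0}$ produces a pair $(\Phi,F)\subset K_{i_0}$ with $\Phi$ a compactum of cohomological dimension $n$, $F$ closed and nowhere dense in $\Phi$, and $(\Phi,F)$ a strong $K^n_G$-manifold. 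This settles the main assertion of the lemma.

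For the ``in particular'' claim, I need to show that a strong $K^n_G$-manifold $\Phi$ is a $V^n(G)$-space. Fix disjoint closed sets $P,Q\subset\Phi$ with $\inte P,\inte Q\ne\varnothing$, and apply the strong $K^n_G$-connectedness to the disjoint nonempty open sets $\inte P$ and $\inte Q$ to obtain an open cover $\omega$ of $\Phi$ and a class $\mathrm{e}\in H^{n-1}(N_\omega,N_{\omega|(Y\cap F)};G)$, where $Y=\Phi\setminus(\inte P\cup\inte Q)$, such that $\kappa^*_{\omega_C}(i^*_{\omega_C}(\mathrm{e}))\ne 0$ for every partition $C$ between $\inte P$ and $\inte Q$. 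Every partition between $P$ and $Q$ also separates $\inte P$ from $\inte Q$, so the nontriviality propagates to all $(P,Q)$-partitions. It thus suffices to show that no such partition admits an $\omega|C$-map into a space of cohomological dimension at most $n-2$.

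Suppose, towards contradiction, that some partition $C$ between $P$ and $Q$ admits an $\omega|C$-map $g\colon C\to Z$ with $\dim_G Z\le n-2$. I would choose a finite open cover $\mu$ of $Z$ whose $g$-preimage refines $\omega|C$. Then the canonical map $\kappa_{\omega_C}$ factors, up to the natural refinement homomorphism between nerves, through the map $C\to N_\mu$ induced by $g$; hence at the level of \v{C}ech cohomology $\kappa^*_{\omega_C}\circ i^*_{\omega_C}(\mathrm{e})$ factors through $H^{n-1}(Z,g(C\cap F);G)$. Because $C$ is compact, $g(C\cap F)$ is closed in $Z$, and the hypothesis $\dim_G Z\le n-2$ forces this relative group to vanish. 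Consequently $\kappa^*_{\omega_C}(i^*_{\omega_C}(\mathrm{e}))=0$, contradicting the nontriviality established in the previous paragraph.

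The principal obstacle is the factorization step in the final paragraph: one has to verify carefully that the canonical map $\kappa_{\omega_C}$ genuinely factors, in \v{C}ech cohomology on the pair $(C,C\cap F)$, through a nerve map attached to a cover of $Z$, using only the $\omega|C$-property of $g$ and the naturality of the nerve construction under refinement. This requires some care with the relative pairs and with the passage to common refinements, but once the factorization is in place the vanishing $H^{n-1}(Z,B;G)=0$ for closed $B\subset Z$ delivers the contradiction automatically.
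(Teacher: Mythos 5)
Your argument is correct and, for the main assertion, is exactly the paper's proof: $\sigma$-compactness plus the countable sum theorem and monotonicity yield a compactum of cohomological dimension $n$, to which Proposition 2.1 is applied. The only divergence is in the ``in particular'' clause: the paper simply cites \cite[Proposition 2.2]{tv} for the implication ``strong $K^n_G$-manifold $\Rightarrow$ $V^n(G)$-space,'' whereas you re-derive it. Your derivation is the standard one and is sound in outline: reducing from the closed sets $P,Q$ to $\inte P,\inte Q$ is legitimate since any partition between $P$ and $Q$ is a partition between $\inte P$ and $\inte Q$; and the factorization step you flag does go through, because after replacing $Z$ by the compact set $g(C)$ one may identify the nerve of $g^{-1}(\mu)$ with the nerve of $\mu$, so that $\kappa^*_{\omega_C}$ factors through $\kappa_\mu^*\colon H^{n-1}(N_\mu,N_{\mu|g(C\cap F)};G)\to H^{n-1}(Z,g(C\cap F);G)$, and the latter group vanishes by the paper's characterization of $\dim_G Z\le n-2$. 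Note that this argument actually kills the entire homomorphism $\kappa^*_{\omega_C}$, so the non-strong $K^n_G$-connectedness would already suffice for the contradiction; the strong version is what is needed later for Theorem~\ref{c1}, not here. In short: same route, with one citation unpacked rather than invoked.
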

\begin{proof}
Since $X$ is a countable union of compact sets, there exists a compactum $Y\subset X$ with $\dim_GY=n$ (otherwise, by the countable sum theorem for $\dim_G$, $\dim_G X\leq n-1$). Then, by Proposition 2.1, there is a pair $F\subset\Phi$ of compact sets in $Y$ with $\dim_G\Phi=n$ such that $F$ is nowhere dense in $\Phi$ and $(\Phi,F)$ is a strong $K^n_G$-manifold. Finally, according to~\cite[Proposition 2.2]{tv}, $\Phi$ is
a $V^n(G)$-space.
\end{proof}

\begin{re} According to~\cite[Theorem 2.6]{kktv}, every compactum $Y$ with $\dim_GY=n$ contains a compact $V^n(G)$-space $\Phi$ such that $\dim_G\Phi=n$.  Lemma~\ref{le1} is a stronger statement which will allow us to prove Theorem~\ref{c1}.
\end{re}

\begin{lem}\label{le2}
Let $X$ be a strongly locally homogeneous generalized continuum and $\Phi\subset X$ be a compactum containing at least two points. Suppose that $U$ and $V$ are nonempty open subsets of $X$ with $U\cap V=\varnothing$. Then there exists a homeomorphism $h:X\to X$ such that $h(\Phi)\cap U\neq\varnothing\neq h(\Phi)\cap V $.
\end{lem}
\begin{proof}
For every $x\in X$, let $\mathcal B_x$ be a basis at $x$ consisting of open sets $W\subset X$ such that for all $y,z\in W$ there is a homeomorphism $h$ on $X$ with $h(y)=z$ and such that $h$ is the identity outside $W$.

Since $X$ is homogeneous, we can assume that $\Phi\cap U\neq\varnothing$ and choose $x_U\in \Phi\cap U$. Let $\Gamma$ be the set of all $x\in X$ such that $x\neq x_U$ and there is a homeomorphism $h_x:X\to X$ with $x\in h_x(\Phi)$ and $h_x(x_U)=x_U$. Take $x\in\Phi\backslash\{x_U\}$. 
Obviously, the identity homeomorphism on $X$ witnesses that $x\in\Gamma$, so $\Gamma\neq\varnothing$.
\begin{claim}\label{cl1}
 $\Gamma$ is open in $X\backslash\{x_U\}$.
\end{claim}
Indeed, let $x\in\Gamma$.  So, there exists a homeomorphism $h_x$ on $X$ such that
$x\in h_x(\Phi)$ and $h_x(x_U)=x_U$. Choose  $W_x\in\mathcal B_x$  with $x_U\not\in W_x$.
Then for every $y\in W_x$ there is a homeomorphism $h_{xy}$ on $X$ with $h_{xy}(x)=y$ and $h_{xy}(z)=z$ for all $z\not\in W_x$. The composition $h_y=h_{xy}\circ h_x$ is a homeomorphism  such that $y\in h_y(\Phi)$ and $h_y(x_U)=x_U$. This shows that $W_x\subset\Gamma$, hence $\Gamma$ is open in $X\backslash\{x_U\}$.

\begin{claim}\label{cl2}
 $\Gamma$ is closed in $X\backslash\{x_U\}$.
\end{claim}
To show Claim~\ref{cl2}, let $\{x_k\}_{k=1}^\infty$ be a sequence in $\Gamma$ converging to a point $x_0\in X\backslash\{x_U\}$.
Since $x_0\neq x_U$, we
take  $W_{x_0}\in\mathcal B_{x_0}$ with $x_U\not\in W_{x_0}$ and let $x_k\in W_{x_0}$. There exist homeomorphisms $h_{x_k}$ and $h_{x_kx_0}$ on $X$ such that
$$x_k\in h_{x_k}(\Phi),\quad h_{x_k}(x_U)=x_U,\quad
h_{x_kx_0}(x_k)=x_0$$ and $h_{x_kx_0}$ is the identity outside $W_{x_0}$. Evidently, the homeomorphism $h_{x_0}=h_{x_kx_0}\circ h_{x_k}$ satisfies the conditions $x_0\in h_{x_0}(\Phi)$ and $h_{x_0}(x_U)=x_U$. Hence, $x_0\in\Gamma$, which completes the proof of Claim~\ref{cl2}.

Since $X\backslash\{x_U\}$ is a connected set \cite{kr},  $\Gamma=X\backslash\{x_U\}$ by Claims~\ref{cl1} and~\ref{cl2}. So, every $x\in V$ belongs to $\Gamma$. This means that there is a homeomorphism $h$ on $X$ such that $h(\Phi)$ meets both sets $U$ and $V$.
\end{proof}
\begin{re}
Another proof of Lemma \ref{le2} follows from the fact that every strongly locally homogeneous space is 2-homogeneous \cite{ba}. This means that if $A,B$ are 2-points sets in $X$, then there is a homeomorphism $h$ on $X$ with $h(A)=B$. To apply this fact, choose $A,B$ any 2-points sets such that $A\subset\Phi$ and $B$ meets both $U$ and $V$.   
\end{re}
\textit{Proof of Theorem~\ref{t1}}

Suppose $X$ is not a  $V^n(G)$-space. Hence, there are closed disjoint sets $P, Q\subset X$, both having nonempty interiors, such that for every open cover $\omega$ of $X$ there exists a partition $C_\omega$ in $X$ between $P$ and $Q$ admitting an $\omega$-map into a space $Y_\omega$ with $\dim_GY_\omega\leq n-2$.

According to Lemma~\ref{le1}, $X$ contains a compact $V^n(G)$-space $\Phi$.
By Lemma~\ref{le2}, there is a homeomorphism $h:X\to X$ such that $h(\Phi)$ meets both $\inte P$ and $\inte Q$.

Since $h(\Phi)$ is a $V^n(G)$-space, there is an open cover $\omega_0$ of $h(\Phi)$ such that no partition in $h(\Phi)$ between the sets $h(\Phi)\cap P$ and $h(\Phi)\cap Q$ admits an $\omega_0$-map into a space of dimension $\dim_G\leq n-2$. Extend $\omega_0$ to an open cover $\omega_0^*$ of $X$ and observe that, for the partition $C_{\omega_0^*}$,  the set $C_{\omega_0^*}\cap h(\Phi)$ is a partition in $h(\Phi)$ between
$h(\Phi)\cap P$ and $h(\Phi)\cap Q$ which admits an $\omega_0$-map into the space  $Y_{\omega_0^*}$ with $\dim_GY_{\omega_0^*}\leq n-2$, a contradiction. $\Box$

\

\textit{Proof of Theorem~\ref{c1}}

Suppose $U$ and $V$ are nonempty, disjoint, open subsets of $X$, and let $M\subset X$ be a set of dimension $\dim_GM\leq n-2$. Considering smaller subsets if necessary, we can assume that the closures $\overline U$ and $\overline V$ are also disjoint. By  Lemma~\ref{le1}, there is a compact pair $(\Phi,F)$ in $X$ with $\dim_G\Phi=n$ such that $F$ is nowhere dense in $\Phi$ and $(\Phi,F)$ is a strong $K^n_G$-manifold.
By Lemma~\ref{le2}, there is a homeomorphism $h$ on $X$ such that $h(\Phi)\cap U\neq\varnothing\neq h(\Phi)\cap V$. Then
$h(\Phi)$ is a continuum joining $U$ and $V$. So,  if $M\cap h(\Phi)=\varnothing$, we are done. If $M\cap h(\Phi)\neq\varnothing$, we can apply
\cite[Theorem 3.1]{tv} (see Theorem~\ref{t3} in the Appendix)  to find a continuum $K\subset h(\Phi)\backslash M$ joining $U\cap h(\Phi)$ and $V\cap h(\Phi)$.  $\Box$

\section{Appendix}
In this section we relax the assumption that spaces are locally compact separable metric.

 In our paper \cite{kktv} we considered the dimension $D_{\mathcal K}$ which unifies the covering and the cohomological dimensions.
 \begin{dfn}\emph{\cite{dr}}
 A sequence $\mathcal{K}=\{K_0,K_1,..\}$ of $CW$-complexes is called a {\em stratum} for a
dimension theory if
for each (paracompact) space $X$ admitting a perfect map onto a metrizable space, $K_n\in AE(X)$
implies both $K_{n+1}\in AE(X\times [0,1])$ and $K_{n+j}\in AE(X)$ for all $j\geq 0$
(here, $K_n\in AE(X)$ means
that $K_n$ is an absolute extensor for $X$).

Given a stratum $\mathcal{K}$, the dimension function
$D_{\mathcal{K}}$ for a  space $X$ as above is defined in the following way:
\begin{enumerate}
\item
$D_{\mathcal{K}}(X)=-1$ iff $X=\emptyset$;
\item $D_{\mathcal{K}}(X)\le n$ if
$K_n\in AE(X)$ for $n\ge 0$; if $D_{\mathcal{K}}(X)\le n$ and $K_m\not\in AE(X)$  for all $m<n$, then $D_{\mathcal{K}}(X)= n$;
\item
 $D_{\mathcal{K}}(X)=\infty$ if $D_{\mathcal{K}}(X)\le n$ is not satisfied for any $n$.
\end{enumerate}
\end{dfn}

   We proved in \cite[Theorem 2.6]{kktv} that any compact space $X$ (not necessarily metrizable) of dimension $D_{\mathcal K}X=n$ contains a continuum $V^n$ with respect to $D_{\mathcal K}$. The definition of continuum $V^n$ with respect to $D_{\mathcal K}$ is the same as that one of a  $V^n(G)$-space; the only difference is that we consider the dimension $D_{\mathcal K}$ instead of $\dim_G$. So, in order to prove Theorem~\ref{t1} in a more general version for the dimension $D_{\mathcal K}$ ,
we can use Theorem 2.6 from \cite{kktv} instead of Proposition 2.1.

 There is one claim from the proof of \cite[Theorem 2.6]{kktv}, see Claim 2.7, which was left without proof in that paper. Below, we  provide a detailed proof of the claim for completeness.
\begin{lem}\emph{\cite[Claim 2.7]{kktv}}
Let $F:Z\to K$ be a map, where $Z$ is a nonempty compact space and $K$ is a nonempty metrizable $ANR$.
Then for every open cover $\gamma$ of $K$ there exists an open cover $\nu$ of $Z$ satisfying the following condition: for any
closed set $B\subset Z$ and any $\nu$-map $\varphi: B\to Y$, where $Y$ is a paracompact space, there is a map $g:\varphi(B)\to K$ such that $g\circ\varphi$ is $\gamma$-close to the restriction $F|B$.
\end{lem}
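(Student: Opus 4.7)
The plan is to combine a Borsuk-type partial-realization theorem for the metric $ANR$ $K$ with a nerve-and-partition-of-unity construction on the compact image $\varphi(B)$. First I would fix a star-refinement $\gamma'$ of $\gamma$ (possible since $K$ is metrizable) and invoke the classical fact that, because $K$ is a metric $ANR$, there is an open cover $\beta$ of $K$ refining $\gamma'$ with the following partial-realization property: for every simplicial complex $N$ and every vertex-labeling $f_0\colon V(N)\to K$ sending the vertex-set of each simplex into a common element of $\beta$, there is a continuous extension $\tilde g\colon |N|\to K$ of $f_0$ whose restriction to each closed simplex takes values in a single element of $\gamma'$. This is the form of Borsuk's realization theorem found, for instance, in Hu's \emph{Theory of Retracts}.

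Next I would transfer $\beta$ to $Z$ via $F$. Using compactness of $Z$ and continuity of $F$, choose a finite open cover $\mu$ of $Z$ such that $F(U)$ lies in a single element of $\beta$ for every $U\in\mu$, and let $\nu$ be a finite open star-refinement of $\mu$. This $\nu$ will be the cover announced in the lemma. Given a closed $B\subset Z$ and a $\nu$-map $\varphi\colon B\to Y$, the witnessing cover $\mathcal W$ of $Y$ restricts to an open cover of the compact set $\varphi(B)$; I would extract a finite subfamily $\{W_1,\dots,W_m\}$ covering $\varphi(B)$, pick $b_j\in\varphi^{-1}(W_j)\cap B$ for each $j$, and set $k_j=F(b_j)\in K$. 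These labels will feed Borsuk's theorem for the nerve $N$ of $\{W_j\cap\varphi(B)\}$.

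The heart of the argument is the verification of the $\beta$-hypothesis. Whenever $W_{j_1},\dots,W_{j_q}$ share a common point $y\in\varphi(B)$, pick $b^*\in\varphi^{-1}(y)\cap B$; every $b_{j_i}$ and $b^*$ then lie in a common $U_{j_i}\in\nu$ by the refinement property of the $\nu$-map, and since all of these $U_{j_i}$ contain $b^*$, the star-refinement of $\mu$ by $\nu$ places $\bigcup_i U_{j_i}$ inside a single $U^*\in\mu$, whence $\{F(b^*),k_{j_1},\dots,k_{j_q}\}\subset F(U^*)\subset V^*$ for some $V^*\in\beta$. Borsuk's theorem then furnishes an extension $\tilde g\colon|N|\to K$ sending each closed simplex into a single element of $\gamma'$, and composing with the canonical map $\kappa\colon\varphi(B)\to|N|$ coming from a partition of unity subordinate to $\{W_j\cap\varphi(B)\}$ defines $g=\tilde g\circ\kappa$.

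To check $\gamma$-closeness of $g\circ\varphi$ and $F|B$, fix $b\in B$ and let $\sigma$ be the carrier simplex of $\kappa(\varphi(b))$. Taking $b^*=b$ in the preceding paragraph puts $F(b)$ and all vertex-labels of $\sigma$ inside a common $V^*\in\beta$, hence inside some element of $\gamma'$, while the partial-realization property of $\tilde g$ puts $g(\varphi(b))=\tilde g(\kappa(\varphi(b)))$ together with these same vertex-labels inside another element of $\gamma'$. The star-refinement $\gamma'$ of $\gamma$ then unites these two $\gamma'$-elements in one element of $\gamma$, which contains both $F(b)$ and $g(\varphi(b))$. I expect the main obstacle to be the careful alignment of the two chains of refinements, namely $\beta\prec\gamma'$ followed by $\gamma'$ star-refining $\gamma$ on $K$, together with $\nu$ star-refining $\mu\prec F^{-1}(\beta)$ on $Z$, so that Borsuk's theorem and the star-refinement trick can be applied in tandem. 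Paracompactness of $Y$ is used only through the existence of the witnessing cover $\mathcal W$ of the $\nu$-map; compactness of $\varphi(B)$ then supplies both the finite subcover $\{W_j\}$ and the partition of unity defining $\kappa$.
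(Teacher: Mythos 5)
Your argument is correct, and its overall skeleton---pass to a finite open cover $\{W_j\}$ of the compact image $\varphi(B)$, send $\varphi(B)$ to the nerve of that cover by a partition of unity, realize the nerve in $K$ using vertex labels of the form $F(b_j)$ with $b_j\in\varphi^{-1}(W_j)$, and finish with a star-refinement argument---coincides with the paper's. You genuinely diverge in the two key ingredients, though. First, to map the nerve into $K$ you invoke Borsuk's partial-realization theorem for ANRs as a black box (vertex labelings that are $\beta$-small extend to full realizations that are $\gamma'$-small); the paper instead reproves exactly the special case it needs by hand: it embeds $K$ as a closed subset of a normed space $L$, takes a retraction $r\colon W\to K$ of an open neighborhood, shrinks each $U\in\gamma$ to a convex open $\widetilde U\subset L$ with $r(\widetilde U)\subset U$, and defines the realization by $\sum t_V V\mapsto r\bigl(\sum t_V a_V\bigr)$, convexity of $\widetilde U$ doing the work of the realization theorem. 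Second, you place the star-refinement on $Z$ (taking $\nu$ to star-refine a cover $\mu$ with $F(\mu)$ refining $\beta$), while the paper places it on $K$ (taking $\gamma'$ to star-refine $\widetilde\gamma=\{\widetilde U\cap K\}$ and simply setting $\nu=F^{-1}(\gamma')$); the paper's choice makes the ``all labels lie in a common small set'' verification automatic, since every $\Lambda_V$ containing a given $z$ pushes forward into an element of $\gamma'$ containing $F(z)$. Your route is shorter if the realization theorem may be cited---and it may, since the nerve here is finite, so even the weakest version for finite complexes suffices---whereas the paper's is self-contained modulo the standard embedding-and-retraction characterization of metric ANRs. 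Both are valid; the only housekeeping point in yours is to discard those $W_j$ that miss $\varphi(B)$ so that the points $b_j$ exist.
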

\begin{proof}
We embed $K$ as a closed subset of a normed space $L$ and find a retraction $r:W\to K$, where $W$ is open in $L$. For every $x\in K$, choose $U_x\in\gamma$ containing $x$, and
 a convex open subset $\widetilde U_x$ of $L$ such that  $x\in \widetilde U_x$ and  $r(\widetilde U_x)\subset U_x$. Next, take an open cover $\gamma'$ of $K$ which  is a star-refinement of the cover $\widetilde\gamma=\{\widetilde U_x\cap K:x\in K\}$.

 Let us show that the cover $\nu=F^{-1}(\gamma')$ is as required.
Suppose $B\subset Z$ is closed and $\varphi:B\to Y$ is a $\nu$-map. We can assume that $\varphi(B)=Y$, so there is a finite open cover $\beta$ of $Y$ such that $\varphi^{-1}(\beta)$ refines $\nu$.  For every $V\in\beta$ fix $\Lambda_V\in\nu$ and a point $a_V\in F(\Lambda_V)$ with $\varphi^{-1}(V)\subset \Lambda_V$.
Let $N_\beta$ be the nerve of $\beta$ and $\kappa_\beta:Y\to N_\beta$ the canonical map assigning to each $y\in Y$ the point
$\sum_{V\in\beta} f_{V}(y)V$, where $y\in\cap_{V\in\beta} V$ and $\{f_V:V\in\beta\}$ is a partition of unity subordinated to $\beta$
(here the elements $V\in\beta$ are considered also as vertices of $N_\beta$).

We define a map
$h:N_\beta\to K$ by $$h(\sum_{V\in\beta} t_V V)=r(\sum_{V\in\beta} t_Va_V),\quad\text{where}\quad \sum_{V\in\beta}t_V=1, t_V\ge 0,$$  and let $g=h\circ\kappa_\beta$.
To check that $g\circ\varphi$ is $\gamma$-close to $F|B$, we fix $z\in B$. Then
$$\kappa_\beta(\varphi(z))=\sum_{V\in\beta} f_{V}(\varphi(z))V \quad\text{and}\quad
h(\kappa_\beta(\varphi(z)))=r(\sum_{V\in\beta} f_{V}(\varphi(z))a_V),$$ where $\varphi(z)\in\cap_{V\in\beta} V.$

Since $\gamma'$ is a star-refinement of $\widetilde\gamma$, there is $U_x\in\gamma$ such that $\widetilde U_x\cap K$ contains all points
$a_V$. So, $\sum_{V\in\beta} f_{V}(\varphi(z))a_V\in\widetilde U_x$ because $\widetilde U_x$ is convex. Consequently,
$h(\kappa_\beta(\varphi(z)))\in U_x$. On the other hand, $F(z)\in\cap_{V\in\beta} F(\Lambda_{V})\subset U_x$. Therefore, $F|B$ and $g\circ\varphi$ are $\gamma$-close.
\end{proof}

The proof of Theorem~\ref{c1} is based on Theorem 3.1 from \cite{tv}.
Here is the formulation of that theorem which was actually established in~\cite{tv}:
\begin{thm}\label{t3}\cite[Theorem 3.1]{tv}
Let $(X,F)$ be a strong $K^n_G$-manifold and $M$ be a Lindel\"{o}f, normally placed subset of $X$ with $\dim_GM\leq n-2$. Then for every  disjoint, nonempty, open subsets $U$ and $V$ of $X$ there exists a continuum $K\subset X\backslash M$ such that $U\cap K\neq\varnothing\neq V\cap K$.
\end{thm}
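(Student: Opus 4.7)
The plan is a proof by contradiction, pitting the strong $K^n_G$-manifold structure of $(X,F)$ against the low cohomological dimension of $M$. Assume no continuum in $X \setminus M$ meets both $P$ and $Q$. Applying the defining property of a strong $K^n_G$-manifold to the disjoint open pair $P,Q$ produces an open cover $\omega$ of $X$ and a distinguished class
\[
\mathrm{e} \in H^{n-1}(N_\omega, N_{\omega|(Y \cap F)}; G), \qquad Y = X \setminus (P \cup Q),
\]
such that $\kappa^*_{\omega_{C}}\bigl(i^*_{\omega_{C}}(\mathrm{e})\bigr) \neq 0$ in $H^{n-1}(C, C \cap F; G)$ for \emph{every} partition $C$ in $X$ between $P$ and $Q$. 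Deriving a contradiction then reduces to exhibiting a single partition for which this class vanishes.

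Next, I would exploit the no-continuum assumption to construct partitions lying in arbitrarily small neighborhoods of $M$. For each open refinement $\nu$ of $\omega$, Lindelöf-normal placement of $M$ permits shrinking an open neighborhood of $M$ contained in the star-neighborhood $\mathrm{St}(M,\nu)$ to a closed $F_\sigma$-set $M'$ with $M \subset M' \subset \mathrm{St}(M,\nu)$. Let $A$ be the union of those connected components of $X \setminus M'$ that meet $P$; the hypothesis that no continuum in $X \setminus M$ joins $P$ and $Q$ forces $A \cap \overline{Q} = \varnothing$, so
\[
C_\nu := \partial A \subset M' \subset \mathrm{St}(M,\nu)
\]
is a partition in $X$ between $P$ and $Q$ lying arbitrarily close to $M$.

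Finally, I would convert the proximity $C_\nu \subset \mathrm{St}(M,\nu)$ into cohomological vanishing. Because $\dim_G M \leq n-2$, the definition of cohomological dimension gives $H^{n-1}(B, A; G) = 0$ for every closed pair $A \subset B$ in $M$, and continuity of \v{C}ech cohomology transports this triviality along the $\omega$-close nerve approximation of $C_\nu$ by subsets of $M$ (an application of the Lemma above to a suitable $ANR$-approximation of the inclusion $M \hookrightarrow X$). Choosing $\nu$ fine enough then forces $\kappa^*_{\omega_{C_\nu}}(i^*_{\omega_{C_\nu}}(\mathrm{e})) = 0$, contradicting the distinguishing property of $\mathrm{e}$.

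The main obstacle is the partition-approximation step of the middle paragraph. When $X$ is compact and $M$ closed, $\partial A$ is handled by the classical component-boundary argument; for $X$ merely locally compact and $M$ only Lindelöf-normally-placed, the construction of the shrinking $M'$ and the verification that $\partial A$ stays inside $\mathrm{St}(M,\nu)$ require careful use of normal placement in the non-compact framework. The final cohomological step is then a relatively routine combination of \v{C}ech continuity with the homological characterization of $\dim_G$.
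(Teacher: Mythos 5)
The paper itself offers no proof of this statement: Theorem~\ref{t3} is quoted from \cite[Theorem 3.1]{tv} exactly so that it can be used as a black box in the proof of Theorem~\ref{c1}, so there is no internal argument to measure yours against. Judged on its own terms, your outline has the right overall shape --- argue by contradiction, invoke the distinguished class $\mathrm{e}$ supplied by strong $K^n_G$-connectedness, and try to manufacture a partition on which $\kappa^*_{\omega_C}(i^*_{\omega_C}(\mathrm{e}))$ must vanish --- but the middle step, which you yourself flag as the main obstacle, does not just require care: as written it fails.

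You propose to choose a \emph{closed} set $M'$ with $M\subset M'\subset \mathrm{St}(M,\nu)$ and to take $C_\nu=\partial A$, where $A$ is the union of the components of $X\setminus M'$ meeting $P$. Any closed set containing $M$ contains $\overline{M}$, so this requires $\overline{M}\subset \mathrm{St}(M,\nu)$. But the hypotheses allow $M$ to be dense: in a separable metric space every subset is Lindel\"of and normally placed (the paper says so explicitly), so for $n\ge 2$ a countable dense subset of $X$ is an admissible $M$ with $\dim_G M=0\le n-2$. Then $\overline{M}=X$, the only admissible $M'$ is $X$ itself, $X\setminus M'=\varnothing$, and no partition is produced; moreover $\mathrm{St}(M,\nu)=X$ for every cover $\nu$, so ``proximity to $M$'' carries no information and your final vanishing step has nothing to act on. This is the essential difficulty of the theorem, not a technicality: $M$ need not sit inside any small closed set, and the argument of \cite{tv} must exploit $\dim_G M\le n-2$ on $M$ itself (via its decomposition into countably many sets closed in $M$, separated from compacta using normal placement, and map-extension/cohomology-killing arguments on those pieces), rather than on a neighborhood of $M$. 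Two secondary problems in the same paragraph: components of the open set $X\setminus M'$ need not be open, since a strong $K^n_G$-manifold is not assumed locally connected, so $A$ need not be open and $\partial A$ need not be a partition; and a component of $X\setminus M'$ meeting both $P$ and $Q$ is merely a connected set, which does not immediately yield a \emph{continuum} in $X\setminus M$ joining $P$ and $Q$ without an additional continuum-wise connectedness argument for connected, locally compact separable metric spaces. The first and third paragraphs of your sketch are fine as far as they go, but the construction of the offending partition has to be redone from scratch.
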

Recall that a set $M\subset X$ is normally placed if every two closed disjoint sets in $M$ have disjoint open in $X$ neighborhoods. For example, every $F_\sigma$-subset of a normal space is normally placed. The same is true if $X$ is a metric space and $M\subset X$ is arbitrary.


\end{document}